\documentclass[10pt]{amsart}
\usepackage[margin=1in]{geometry}
\usepackage{mathrsfs}
\usepackage{amsmath, enumerate, graphicx, float}
\usepackage[colorlinks = true,
linkcolor = blue,
urlcolor  = blue,
citecolor = blue,
anchorcolor = blue]{hyperref}
\usepackage{amsfonts}
\usepackage{amssymb, indentfirst}
\usepackage[toc,page]{appendix}
\usepackage{tikz}
\usetikzlibrary{matrix,arrows}
\tikzset{global scale/.style={
    scale=#1,
    every node/.style={scale=#1}
  }
}
\usetikzlibrary{patterns}
\usepackage[noadjust]{cite}

%=======================================
%              Math macros
%=======================================

%    Math bold symbols

\def\CC{\mathbb{C}}

\def\NN{\mathbb{N}}
\def\PP{\mathbb{P}}

\def\RR{\mathbb{R}}

\def\ZZ{\mathbb{Z}}
%     Sheaves and Schemes

\def\shF{\mathscr{F}}

\def\shI{\mathcal{I}}
%     Categories

%     Ideals

%     Algebraic notation

%     Math operators

\DeclareMathOperator\conv{conv}

\DeclareMathOperator\Vol{Vol}
\DeclareMathOperator\reg{reg}

\DeclareMathOperator\codim{codim}
\DeclareMathOperator\ehr{ehr}
\DeclareMathOperator\Ehr{Ehr}
\DeclareMathOperator\h{H}
%\DeclareMathOperator\d{d}

%     Environments
\newtheorem{theorem}{Theorem}[section]
\newtheorem{corollary}[theorem]{Corollary}
\newtheorem{proposition}[theorem]{Proposition}
\newtheorem{lemma}[theorem]{Lemma}

\theoremstyle{remark}
\newtheorem{remark}[theorem]{Remark}
\newtheorem{example}[theorem]{Example}
\theoremstyle{definition}
\newtheorem{definition}[theorem]{Definition}

\usepackage{tikz-3dplot}

\begin{document}
\title[An Eisenbud-Goto-type Upper Bound for Weighted Projective Spaces]{An Eisenbud-Goto-type Upper Bound for the Castelnuovo-Mumford Regularity of Fake Weighted Projective Spaces}
\author{Bach Le Tran}
\email{b.tran@sms.ed.ac.uk}
\address{School of Mathematics\\ University of Edinburgh\\ James Clerk Maxwell Building\\ Peter Guthrie Tait Road\\ Edinburgh EH9 3FD}
%\pagenumbering{roman}
\maketitle
%\tableofcontents
\begin{abstract}	
	We will give an upper bound for the $k$-normality of very ample lattice simplices, and then give an Eisenbud-Goto-type bound for some special classes of projective toric varieties.
\end{abstract}
\section{Introduction}
	The study of the Castelnuovo-Mumford regularity for projective varieties has been greatly movitated by the Eisenbud-Goto conjecture (\cite{Eisenbud1984}) which asks for any irreducible and reduced variety $X$, is it always the case that
	\begin{equation}\label{Eisenbud-Goto conjecture}
		\reg(X)\le \deg(X)-\codim(X)+1?
	\end{equation}
	
	The Eisenbud-Goto conjecture is known to be true for some particular cases. For example, it holds for smooth surfaces in characteristic zero (\cite{Lazarsfeld1987}), connected reduced curves (\cite{Giaimo2005}), etc. Inspired by the conjecture, there are also many attempts to give an upper bound for the Castelnuovo-Mumford regularity for various types of algebraic and geometric structures (\cite{Sturmfels1995}, \cite{Kwak1998}, \cite{Miyazaki2000}, \cite{Derksen2002},  etc).  
	
	For toric geometry, suppose that $(X,L)$ is a polarized projective toric varieties such that $L$ is very ample. Then there is a corresponding very ample lattice polytope $P:=P_L$ associated to $L$ such that $\Gamma(X, L)=\bigoplus_{m\in P\cap M}\CC\cdot \chi^m$ (\cite[Section 5.4]{Cox2011}). Therefore, by studying the $k$-normality of $P$ (cf. Definition \ref{k-normality definition}), we can obtain the $k$-normality and also the regularity of the original variety $(X,L)$. For the purpose of this article, we will focus on the case that $X$ is a fake weighted projective $d$-space and $P_L$ a $d$-simplex.
	
	For any fake weighted projective $d$-space $X$ embedded in $\PP^r$ via a very ample line bundle, Ogata (\cite{Ogata2005}) gives an upper bound for its $k$-normality: 
	\[k_X\le \dim X+\left\lfloor \frac{\dim X}{2}\right\rfloor-1.\]
	In this article, we will improve Ogata's bound by giving a new upper bound for the $k$-normality of very ample lattice simplices and show that 
	
\begin{equation}\label{EG style bound}
	\reg(X)\le \deg(X)-\codim(X)+\left\lfloor\frac{\dim X}{2}\right\rfloor.
\end{equation}

Recently, McCullough and Peeva showed some counterexamples to the Eisenbud-Goto conjecture and that the difference $\reg(X)-\deg(X)+\codim(X)$ can be arbitrary large (\cite[Counterexample 1.8]{McCullough2017}). However, for any fake weighted projective space $X$ embedded in $\PP^r$ via a very ample line bundle, it follows from \eqref{EG style bound} that $\reg(X)-\deg(X)+\codim(X)$ is bounded above by $\dim(X)/2$. Furthermore, we will show that the Eisenbud-Goto conjecture holds for any projective toric variety corresponding to a very ample Fano simplex.
%Since Ogata (\cite[Theorem 1]{Ogata2005}) proved if $\dim X=3$ then $X$ is projectively normal; hence,  \eqref{Eisenbud-Goto conjecture} holds by \cite{Eisenbud1984} and (\cite{Hochster1972}), we will only consider the case that $\dim(X)\ge 4$ in this paper. 

\subsection*{Acknowledgments}
We would like to thank Milena Hering for reading the drafts of this article and for some valuable suggestions.
\section{Background Material}
%\subsection{Fake Weighted Projective Spaces}
\subsection{Toric Varieties and Lattice Simplices}
%As a convention, from now on every line bundle in this paper is very ample. 
%We begin this section by reviewing the definitions of Fano polytopes.
We begin this section by recalling the definition of the Castelnuovo-Mumford regularity:
\begin{definition}
	Let $X\subseteq \PP^r$ be an irreducible projective variety and $\shF$ a coherent sheaf over $X$. We say that $\shF$ is $k$-regular if
	\[\h^i(X,\shF(k-i))=0\]
	for all $i>0$. 
	The regularity of $\shF$, denoted by $\reg(\shF)$, is the minimum number $k$ such that $\shF$ is $k$-regular. We say that $X$ is $k$-regular if the ideal sheaf $\shI_X$ of $X$ is $k$-regular and use $\reg(X)$ to denote the regularity of $X$ (or of $\shI_X$). 
\end{definition}

As the main object of the article is to find an upper bound for $k$-normality of very ample lattice simplices, it is important for us to revisit the definition of $k$-normality of lattice polytopes.
\begin{definition}\label{k-normality definition}
	A lattice polytope $P$ is $k$-normal if the map
	\[\underbrace{P\cap M+\cdots+P\cap M}_{k\text{ times}}\rightarrow kP\cap M\]
	is surjective. The $k$-normality of $P$, denoted by $k_P$, is the smallest positive integer $k_P$ such that $P$ is $k$-normal for all $k\ge k_P$. 
\end{definition}

Suppose now that $X$ is a fake weighted projective $d$-space embedded in $\PP^r$ via a very ample line bundle. Then the polytope $P$ corresponding to the embedding is a very ample lattice $d$-simplex. Furthermore, $\codim(X)=|P\cap M|-(d+1)$, where $M$ is the ambient lattice, and $\deg(X)=\Vol(P)$, the normalized volume of $P$. 

%\begin{definition}
%Let $X\subseteq \PP^r$ be a projective variety. We say that $X$ is $k$-normal if the restriction map
%\[\h^0(\PP^r,\shO_{\PP^r}(k))\rightarrow \h^0(X,\shO_X(k))\]
%is surjective. We define the $k$-normality of $X$ to be the smallest positive integer $k_X$ such that $X$ is $k$-normal for all $k\ge k_X$.
%\end{definition}

%\subsection{$k$-normality Revisited}

We have a combinatorial interpretation of $\reg(X)$ in terms of $k_P$ and $\deg (P)$ (\cite[Proposition 4.1.5]{Tran2018}) as follows:
\begin{equation}\label{reg(X) formula}
	\reg(X)=\max\{k_P, \deg (P)\}+1.
\end{equation}
From this, we obtain a combinatorial form of the Eisenbud-Goto conjecture: for very ample lattice polytope $P\subset M_{\RR}$, is it always true that
\[\max\{\deg (P), k_P\}\le \Vol(P)-|P\cap M|+d+1?\]
The first inequality was confirmed to be true recently (\cite[Proposition 2.2]{Hofscheier2017}); namely,
\begin{equation}\label{HKN}
	\deg(P)\le \Vol(P)-|P\cap M|+d+1.
\end{equation}
Therefore, in order to verify the Eisenbud-Goto conjecture for the polarized toric variety $(X,L)$, it suffices to check if
\begin{equation}\label{k_P inequality}
	k_P\le \Vol(P)-|P\cap M|+d+1.
\end{equation}

\subsection{Ehrhart Theory}
We now recall some basic facts about Ehrhart theory of polytopes and the definition of their degree.

Let $P$ be a lattice polytope of dimension $d$. We define $\ehr_P(k)=|kP\cap M|$, the number of lattice points in $kP$. Then from Ehrhart's theory (\cite{Ehrhart1962, Stanley1980}),
\[\Ehr_P(t)=\sum_{k=0}^{\infty}\ehr_P(k)t^k=\frac{h_P^*(t)}{(1-t)^{d+1}}\]
for some polynomial $h_P^*\in\ZZ_{\ge 0}[t]$ of degree less than or equal to $d$. Let $h_P^*(t)=\sum_{i=0}^dh_i^*t^i$. We have
\[h_0^*=1, \hspace*{2mm} h_1^*=|P\cap M|-d-1,  \hspace*{2mm} h_d^*=|P^0\cap M|, \text{ and }\sum_{i=0}^dh_i^*=\Vol(P). \]

%As a consequence,
%\[h_0^*+h_2^*+\cdots+h_d^*=\Vol(P)-|P\cap M|+d+1.\]

\begin{definition}[{\cite[Remark 2.6]{Batyrev2007}}]\label{degree of polytope}
	Let $P$ be a lattice polytope of dimension $d$. We define the degree of $P$\index{polytope!degree}, denoted by $\deg(P)$, to be the degree of $h_P^*(t)$. Equivalently, 
	\[
	\deg(P)=\begin{cases}
	d & \text{if } |P^0\cap M|\neq 0.\\
	\min\left\{i\in\ZZ_{\ge 0}|(kP)^0\cap M=\emptyset\text{ for all }1\le k\le d-i\right\} &\text{otherwise}. 
	\end{cases}
	\]
\end{definition}

\section{$k$-normality of Very Ample Simplices}

The following lemma by Ogata is crucial to the main result of this article:

\begin{lemma}[{\cite[Lemma 2.1]{Ogata2005}}]\label{Ogata05, Lemma 2.1}
	Let $P=\conv(v_0,\ldots, v_d)$ be a very ample lattice $n$-simplex. Suppose that $k\ge 1$ is an integer and $x\in kP\cap M$. For any $i=0,\ldots, d$, we have
	\[x+(k-1)v_i =\sum_{j=1}^{2k-1}u_j\]
	for some $u_j\in P\cap M$.
\end{lemma}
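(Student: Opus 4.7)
The plan is to exploit very ampleness of $P$ at the vertex $v_i$ to decompose $x - k v_i$ as a non-negative integer sum of edge vectors $w - v_i$ with $w \in P \cap M$, and then to ensure that the total count of summands is at most $2k - 1$. After translating so that $v_i = 0$, the statement is equivalent to saying that every $x \in kP \cap M$ is a sum of $2k - 1$ lattice points of $P$ (with $v_i$ itself available as a padding summand). Very ampleness at $v_i$ gives $\NN\langle (P \cap M) - v_i\rangle = \cone(P - v_i) \cap M$, so $x - k v_i \in k(P - v_i) \cap M$ admits a decomposition
\[
x - k v_i = \sum_l a_l (w_l - v_i), \qquad w_l \in P \cap M,\ a_l \in \NN.
\]
Rearranging yields $x + (k-1) v_i = \sum_l a_l w_l + \bigl(2k - 1 - \sum_l a_l\bigr) v_i$, which is a sum of $2k - 1$ elements of $P \cap M$ precisely when $\sum_l a_l \leq 2k - 1$.

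I would establish the bound $\sum_l a_l \leq 2k - 1$ by strong induction on $k$. The base case $k = 1$ is immediate, since $x \in P \cap M$. For $k \geq 2$, write $x$ in barycentric form $x = \sum_{j=0}^d \mu_j v_j$ with $\mu_j \geq 0$ and $\sum_j \mu_j = k$. If some coordinate $\mu_j \geq 1$ (possibly $j = i$), then $v_j \in P \cap M$ and $x - v_j \in (k-1)P \cap M$, so the inductive hypothesis applied to $x - v_j$ yields $(x - v_j) + (k-2) v_i = \sum_{m=1}^{2k-3} u_m'$, and adding $v_j$ and $v_i$ as two extra summands expresses $x + (k-1) v_i$ as a sum of $2k - 1$ lattice points of $P$.

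The remaining situation---where every $\mu_j < 1$, which since $\sum_j \mu_j = k$ forces $k \leq d$---is where the main obstacle lies, because no vertex can be peeled off. I would handle it by the fundamental parallelepiped decomposition of the simplicial cone at $v_i$: writing $x - k v_i = \pi + \sum_{j \neq i} n_j (v_j - v_i)$ uniquely with $\pi \in \Pi_i \cap M$ and $n_j \in \NN$, and applying very ampleness to the ``fractional part'' $\pi$ to obtain $\pi = \sum_l b_l (u_l - v_i)$ with $u_l \in P \cap M$. Using the linear functional $\psi$ determined by $\psi(v_j - v_i) = 1$ for $j \neq i$, one has $\sum_j n_j = k - \mu_i - \psi(\pi)$ and $\sum_l b_l \geq \psi(\pi)$. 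The delicate point---and the main technical obstacle---is to select a minimal decomposition of $\pi$ so that $\sum_l b_l + \sum_j n_j \leq 2k - 1$, exploiting the constraints $\psi(\pi) < d$ and $k \leq d$ available in this last case.
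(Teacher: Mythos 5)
The paper itself gives no proof of this lemma---it is imported verbatim from Ogata---so your attempt must be judged as a standalone argument, and as such it has a genuine gap at exactly the point you flag. The setup is right: translating $v_i$ to the origin, using very ampleness to write $x-kv_i=\sum_l a_l(w_l-v_i)$, and reducing to the bound $\sum_l a_l\le 2k-1$. The inductive step in the case where some barycentric coordinate $\mu_j\ge 1$ is also correct. But in the remaining case (all $\mu_j<1$, so $k\le d$), which is the crux, you never prove the required upper bound on the number of summands; you only announce that one should ``select a minimal decomposition'' and label the estimate ``the main technical obstacle.'' The inequalities you do record point the wrong way: $\sum_l b_l\ge\psi(\pi)$ is a lower bound on the number of summands, and $\psi(\pi)<d$ together with $k\le d$ cannot produce the upper bound $2k-1$ when $k$ is small relative to $d$. (Note also that in this case every $n_j=0$, since all $\mu_j<1$ means $x-kv_i$ already lies in the fundamental parallelepiped, so the parallelepiped decomposition does no work here.)

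The missing idea is a minimality-plus-pairing argument, which incidentally makes the induction unnecessary. Choose a decomposition $x-kv_i=\sum_{l=1}^{N}(w_l-v_i)$ with $w_l\in P\cap M\setminus\{v_i\}$ and $N$ minimal. If $w_l+w_m-v_i$ were a lattice point of $P$ for some $l\ne m$, then the two terms $(w_l-v_i)+(w_m-v_i)=\bigl((w_l+w_m-v_i)-v_i\bigr)$ could be merged into a single generator, contradicting minimality; since $(w_l-v_i)+(w_m-v_i)$ is a lattice point of $\cone(P-v_i)$ not lying in $P-v_i$, this forces $\psi(w_l-v_i)+\psi(w_m-v_i)>1$ for all $l\ne m$, where $\psi$ is your functional with $\psi(v_j-v_i)=1$. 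Grouping the $N$ summands into $\lfloor N/2\rfloor$ disjoint pairs (plus possibly one leftover term with positive $\psi$-value) gives $\lfloor N/2\rfloor<\sum_{l}\psi(w_l-v_i)=\psi(x-kv_i)\le k$, hence $\lfloor N/2\rfloor\le k-1$ and $N\le 2k-1$; padding with copies of $v_i$ then yields exactly $2k-1$ summands. Without this (or an equivalent) argument, the lemma is not established by your proposal.
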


Using the ideas in \cite[Lemma 2.5]{Ogata2005}, we generalize the above lemma as follows.

\begin{lemma}\label{Modified Ogata 05, Lemma 2.5}
	Suppose that $P=\conv(v_0,\ldots,v_d)$ is a very ample $d$-simplex. Let $k\in \NN_{\ge 1}$. Then for any $x\in kP\cap M$, $a_0,\ldots, a_{d}\in\ZZ_{\ge 0}$ such that $\sum_{i=0}^{d}a_i=k-1$, we have
	\[\sum_{i=0}^{d}a_iv_i+x=\sum_{i=1}^{2k-1}u_i\]
	for some $u_i\in P\cap M$.
\end{lemma}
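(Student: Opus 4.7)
The plan is to proceed by induction on $k$. The base case $k = 1$ is immediate: $\sum_i a_i = 0$ forces every $a_i$ to vanish, and the conclusion becomes $x = u_1$ with $u_1 := x \in P \cap M$.

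For the inductive step, suppose $k \geq 2$. Since $\sum a_i = k - 1 \geq 1$, I would pick some index $i_0$ with $a_{i_0} \geq 1$ and apply Lemma~\ref{Ogata05, Lemma 2.1} to $x$ with the vertex $v_{i_0}$, obtaining
\[
x + (k-1) v_{i_0} = \sum_{j=1}^{2k-1} u_j, \qquad u_j \in P \cap M.
\]
The central algebraic identity is
\[
x + \sum_{i=0}^d a_i v_i = \bigl[x + (k-1) v_{i_0}\bigr] + \sum_{i \neq i_0} a_i (v_i - v_{i_0}),
\]
valid because $\sum_{i \neq i_0} a_i = k - 1 - a_{i_0}$. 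Substituting Ogata's decomposition reduces the task to modifying the multiset $\{u_j\}$ by exchanging $m := k - 1 - a_{i_0}$ copies of $v_{i_0}$ for the vertices $v_i$ ($i \neq i_0$), each with multiplicity $a_i$.

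The main obstacle is that Lemma~\ref{Ogata05, Lemma 2.1} alone does not automatically supply enough copies of $v_{i_0}$ among the $u_j$'s to perform this swap directly. I would address this by a secondary induction on $m$: the base case $m = 0$ corresponds to $a_{i_0} = k-1$ with all other $a_i = 0$ and coincides exactly with Ogata's lemma. For the inductive step, I would pick some $i_1 \neq i_0$ with $a_{i_1} \geq 1$, transfer one unit of weight from $a_{i_1}$ onto $a_{i_0}$, apply the inductive hypothesis to the modified weights to obtain a decomposition $\sum u_j'$, and then execute a single-vertex swap replacing one copy of $v_{i_0}$ with $v_{i_1}$. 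Making the single-vertex swap rigorous is the technical heart of the argument; I expect it to hinge on the very ampleness of $P$ at the vertex $v_{i_0}$ and to mirror the reasoning in \cite[Lemma~2.5]{Ogata2005}, which is precisely where the hypothesis that $P$ is very ample, rather than merely IDP-like, enters.
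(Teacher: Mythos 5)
There is a genuine gap. Your reduction is set up correctly — the identity $x+\sum_i a_iv_i=\bigl[x+(k-1)v_{i_0}\bigr]+\sum_{i\neq i_0}a_i(v_i-v_{i_0})$ is right, and the secondary induction on $m=k-1-a_{i_0}$ with base case equal to Lemma \ref{Ogata05, Lemma 2.1} is a sensible skeleton — but the ``single-vertex swap'' on which the whole inductive step rests is asserted rather than proved, and it is not a routine verification. Concretely, you need: if $z=\sum_{j=1}^{2k-1}u_j'$ with $u_j'\in P\cap M$, then $z-v_{i_0}+v_{i_1}$ is again a sum of $2k-1$ lattice points of $P$. Nothing forces $v_{i_0}$ (or any point $u_j'$ with $u_j'-v_{i_0}+v_{i_1}\in P\cap M$) to occur among the $u_j'$, so there is no point to swap out. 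Rewriting relative to the vertex $v_{i_0}$, the swap asks whether a lattice point of $(2k-1)(P-v_{i_0})$ that is visibly a sum of $2k$ generators of the vertex semigroup is also a sum of $2k-1$ of them; very ampleness only guarantees a representation with \emph{some} number of summands, and controlling that number is exactly the $k$-normality problem the lemma is meant to address. As written, the argument is circular at its key step.

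The paper's proof avoids this entirely by a different inductive mechanism. After translating the chosen vertex (with positive weight) to the origin, Lemma \ref{Ogata05, Lemma 2.1} gives $x=\sum_{j=1}^{2k-1}w_j$ with $w_j\in P\cap M$. Then one splits into two cases: if the consecutive pairs $w_{2j-1}+w_{2j}$ all lie in $P\cap M$, then $x$ itself is a sum of $k$ lattice points of $P$ and the conclusion is immediate; otherwise some pair, say $w_1+w_2$, lies outside $P$, which forces the remainder $y=x-w_1-w_2$ into $(k-1)P\cap M$, and the induction hypothesis (on $k$, applied to $y$ with the weight vector $(a_0-1,a_1,\ldots,a_d)$) finishes the argument after adding back $w_1+w_2$. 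If you want to complete your write-up, you should replace the unproven swap with a case analysis of this kind; the pairing/peeling step is where very ampleness and the simplex structure actually do their work.
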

\begin{proof}
	We will use induction in this proof. The case $k=1$ is trivial. Suppose that the lemma holds for $k=s-1$. We will now show that it holds for $k=s$; i.e., for any $x\in sP\cap M$, $a_1,\ldots, a_{d}\in\ZZ_{\ge 0}$ such that $\sum_{i=0}^{d}a_i=s-1$, we have
\begin{equation}\label{inductive condition}
\sum_{i=0}^{d}a_iv_i+x=\sum_{i=1}^{2s-1}u_i
\end{equation}
	for some $u_i\in P\cap M$. Without loss of generality, we can take $a_0$ to be positive and move $v_0$ to the origin. By Lemma \ref{Ogata05, Lemma 2.1}, 
	\[(s-1)v_0+x=\sum_{i=1}^{2s-1}w_i\]
	for some $w_i\in P\cap M$. Since $v_0=0$, we can write $x=\sum_{i=1}^{2s-1}w_i$. If $w_i+w_j\in P\cap M$ for any $i\neq j$, then we can let $t_i=w_{2i-1}+w_{2i}$ for $i=1,\ldots, s-1$ and have $x=t_1+\cdots+t_{s-1}+w_{2s-1}$, which lies in $\sum_{i=1}^sP\cap M$. Therefore,
\[\sum_{i=0}^{d}a_iv_i+x=\sum_{i=0}^{d}a_iv_i+\sum_{i=1}^{s-1}t_i+w_{2s-1},\]
which satisfies \eqref{inductive condition}. Conversely, without loss of generality, suppose that $w_1+w_2\notin P\cap M$. Then since $x=w_1+w_2+(w_3+\cdots+w_{2s-1})\in sP\cap M$, we have $y:=w_3+\cdots+w_{2s-1}\in(s-1)P\cap M$ and $v_0+x=w_1+w_2+y$. Using the induction hypothesis,
	\[\underbrace{(a_0-1)v_0+\sum_{i=1}^{d}a_iv_i}_{a_0 - 1 + \sum_{i=1}^da_i = s-2}+y=\sum_{i=1}^{2(s-1)-1}w'_i\]
	for some $w'_i\in P\cap M$. It follows that
	\begin{align*}
	\sum_{i=0}^{d}a_iv_i+x &=v_0+x+(a_0-1)v_0+\sum_{i=1}^{d}a_iv_i\\
	&= w_1+w_2+y+(a_0-1)v_0+\sum_{i=1}^{d}a_iv_i\\
	&= w_1+w_2+\sum_{i=0}^{2(s-1)-1}w'_i.
	\end{align*}
	The conclusion follows.
\end{proof}

Now define the invariants $d_P$ and $\nu_P$ as in \cite[Definition 2.2.8]{Tran2018}:
\begin{definition}
	Let $P$ be a lattice polytope with the set of vertices $\mathcal{V}=\{v_0,\ldots, v_{n-1}\}$. We define $d_P$ to be the smallest positive integer such that for every $k\ge d_P$,
	\[(k+1)P\cap M=P\cap M+kP\cap M.\]
	We also define $\nu_P$ to be the smallest positive integer such that for any $k\ge \nu_P$,
	\[(k+1)P\cap M=\mathcal{V}+kP\cap M.\]
	Notice that for $P$ an $n$-simplex, $d_P\le \nu_P\le n-1$.
\end{definition}

\begin{proposition}\label{Bound of k_P for very ample simplex}
	Let $P=\conv(v_0,\ldots,v_d)$ be a very ample $d$-simplex. Then 
	\[k_P\le \nu_P+d_P-1.\]
\end{proposition}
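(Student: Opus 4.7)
The plan is to decompose any lattice point $x \in kP \cap M$ with $k \ge \nu_P + d_P - 1$ as a sum of exactly $k$ lattice points of $P$, by layering three extraction steps: vertex extraction via $\nu_P$, general lattice-point extraction via $d_P$, and a final application of Lemma~\ref{Modified Ogata 05, Lemma 2.5}.

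First, I iterate the defining property of $\nu_P$: any lattice point in $mP$ with $m \ge \nu_P + 1$ splits as a vertex plus a lattice point of $(m-1)P$. Descending one level per extraction from level $k$ down to level $\nu_P$ produces
\[ x = \sum_{j=1}^{k-\nu_P} v_{i_j} + x', \qquad x' \in \nu_P P \cap M. \]
Second, I iterate the defining property of $d_P$ on $x'$, which allows extracting an arbitrary lattice point of $P$ whenever the current level exceeds $d_P$. Descending from $\nu_P$ to $d_P$ gives
\[ x' = \sum_{j=1}^{\nu_P - d_P} w_j + y, \qquad y \in d_P P \cap M, \ w_j \in P \cap M. \]
Third, the hypothesis $k \ge \nu_P + d_P - 1$ is equivalent to $k - \nu_P \ge d_P - 1$, so at least $d_P - 1$ of the vertices $v_{i_j}$ harvested in the first step are on hand. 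Choosing nonnegative integers $a_0, \ldots, a_d$ with $\sum_{i=0}^d a_i = d_P - 1$ so that $\sum a_i v_i$ matches $d_P - 1$ of those vertices, Lemma~\ref{Modified Ogata 05, Lemma 2.5} applied at level $d_P$ yields
\[ y + \sum_{i=0}^d a_i v_i = \sum_{i=1}^{2d_P - 1} u_i, \qquad u_i \in P \cap M. \]

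Assembling the three decompositions expresses $x$ as the sum of the remaining $(k - \nu_P) - (d_P - 1) = k - \nu_P - d_P + 1$ vertices from Step~1, the $\nu_P - d_P$ lattice points $w_j$ from Step~2, and the $2d_P - 1$ lattice points $u_i$ from Step~3. The total is $(k - \nu_P - d_P + 1) + (\nu_P - d_P) + (2d_P - 1) = k$ summands drawn from $P \cap M$, which is precisely the $k$-normal decomposition we want, so $k_P \le \nu_P + d_P - 1$.

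The main obstacle is identifying the correct intermediate level at which to invoke the Ogata-type lemma. Invoking it at level $\nu_P$ immediately after Step~1 would require $\nu_P - 1$ harvested vertices and force the weaker bound $k \ge 2\nu_P - 1$. The key observation is that descending further by the $d_P$ property to level $d_P$ costs $\nu_P - d_P$ extra summands but reduces the vertex requirement of the lemma from $\nu_P - 1$ to $d_P - 1$; since $d_P \le \nu_P$, this trade tightens the bound to the desired $k \ge \nu_P + d_P - 1$, with the summand count balancing exactly.
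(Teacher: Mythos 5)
Your proof is correct and follows essentially the same route as the paper: extract $k-\nu_P$ vertices down to level $\nu_P$, then $\nu_P-d_P$ lattice points of $P$ down to level $d_P$, and finish by applying Lemma~\ref{Modified Ogata 05, Lemma 2.5} at level $d_P$. The only (harmless) bookkeeping difference is that you feed exactly $d_P-1$ of the harvested vertices into the lemma and keep the surplus as standalone summands, whereas the paper absorbs all $k-\nu_P$ of them into one application; the total count of summands is $k$ either way.
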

\begin{proof}
	For any $k\ge d_P+\nu_P-1$ and $p\in kP\cap M$, by the definition of $d_P$ and $\nu_P$, we have
	\begin{equation}\label{first presentation of p}
	p=x+\sum_{i=1}^{\nu_P-d_P}u_i+\sum_{i=0}^{d}a_iv_i
	\end{equation}
	for some $x\in d_PP\cap M$, $u_i\in P\cap M$, $\sum_{i=0}^{d}a_i=k-\nu_P$. By assumption, $k-\nu_P\ge d_P-1$, so it follows from Lemma \ref{Modified Ogata 05, Lemma 2.5} that
	\begin{equation}\label{linear presentation of x+a_iv_i}
	x+\sum_{i=0}^{d}a_iv_i=\sum_{i=1}^{d_P+k-\nu_P}u'_i
	\end{equation}
	for some $u'_i\in P\cap M$. Substitute \eqref{linear presentation of x+a_iv_i} into \eqref{first presentation of p}, we have
	\[p=\sum_{i=1}^{\nu_P-d_P}u_i+\sum_{i=1}^{d_P+k-\nu_P}u'_i.\]
	The conclusion follows.
\end{proof}

\begin{remark}
	This bound is stronger than \cite[Proposition 2.4]{Ogata2005} since $\nu_P\le d$ (\cite[Proposition 2.2]{Tran2018}) and $d_P\le d/2$ (\cite[Proposition 2.2]{Ogata2005}).
	%We conclude this section with some remarks:
%	\begin{enumerate}
%		\item This bound is stronger than \cite[Proposition 4]{Ogata2005} since $\nu_P\le d$ (\cite[Proposition 2.2]{Tran2018}) and $d_P\le d/2$ (\cite[Proposition 2.2]{Ogata2005}).
%		\item 	It follows from Lemma \ref{Ogata05, Lemma 2.1} that $m_P\le 2d_P-1\le d-1$. Hence, if we can prove that $k_P=m_P$ for very ample lattice polytopes (simplices) then it follows that for any very ample lattice $d$-simplex $P$, $k_P\le d-1$.
%	\end{enumerate}
\end{remark}

\section{An Eisenbud-Goto-Type Upper Bound for Very Ample Simplices}

Suppose that $P$ is a very ample simplex. If $P$ is unimodularly equivalent to the standard simplex $\Delta_d=\conv(0,e_1,\ldots,e_d)$ then \eqref{k_P inequality} holds. Now consider the case $P$ is not unimodularly equivalent to $\Delta_d$.

The following lemma is a rewording of \cite[Proposition IV.10]{Hering2006} to fit our purpose. We provide a proof for the sake of completeness. 
\begin{lemma}\label{nu_P<=deg P-1}
	Let $\mathcal{V}=\{v_0,\ldots,v_d\}$ and suppose that $P=\conv(\mathcal{V})$ is a lattice simplex not unimodularly equivalent to $\Delta_d$. Then $\deg(P)\ge \nu_P$.
\end{lemma}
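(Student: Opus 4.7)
The plan rests on the barycentric description of a simplex. Since $P = \conv(v_0, \ldots, v_d)$ has affinely independent vertices, every $p \in (k+1)P$ admits a unique expression $p = \sum_{i=0}^d \lambda_i v_i$ with $\lambda_i \geq 0$ and $\sum_{i=0}^d \lambda_i = k+1$, and $p - v_j \in kP$ holds precisely when $\lambda_j \geq 1$. Therefore the equality $(k+1)P \cap M = \mathcal{V} + kP \cap M$ fails exactly when there is a lattice point $p \in (k+1)P \cap M$ whose barycentric coordinates are all strictly less than $1$.

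Assume first that $\nu_P \geq 2$. By minimality of $\nu_P$, the above equality fails at $k = \nu_P - 1 \geq 1$, producing a witness $p \in \nu_P P \cap M$ with $\lambda_0, \ldots, \lambda_d < 1$. The main trick is to pass to the complementary lattice point
\[
q := (v_0 + v_1 + \cdots + v_d) - p \in M,
\]
whose barycentric coordinates are $\mu_i = 1 - \lambda_i$, each strictly positive and summing to $d + 1 - \nu_P$. Strict positivity of the $\mu_i$ places $q$ in the relative interior of $(d+1-\nu_P)P$, and the bound $\sum \lambda_i < d + 1$ forces $\nu_P \leq d$, so $d + 1 - \nu_P \geq 1$ is a genuine positive dilation. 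Plugging this into the definition of $\deg(P)$ immediately yields $\deg(P) \geq d + 1 - (d + 1 - \nu_P) = \nu_P$.

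The case $\nu_P = 1$ is not captured above, because any witness would have to live at $k = 0$, outside the scope of $\nu_P$. Here I invoke the hypothesis directly: $\deg(P) = 0$ is equivalent to $h_P^*(t) = 1$, hence to $\Vol(P) = 1$, which characterizes unimodular simplices. Since $P$ is not unimodularly equivalent to $\Delta_d$, this forces $\deg(P) \geq 1 = \nu_P$. I do not foresee any serious obstacle; the essential insight is that the complement map $p \mapsto v_0 + \cdots + v_d - p$ trades a failure of $\mathcal{V}$-generation in the dilation $\nu_P P$ for an interior lattice point in the smaller dilation $(d + 1 - \nu_P)P$, and these interior points are exactly what the degree of $P$ records via Ehrhart reciprocity.
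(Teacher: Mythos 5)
Your proof is correct and rests on the same key idea as the paper's: the complement point $v_0+\cdots+v_d-p$ of a witness $p$ whose barycentric coordinates are all below $1$ is an interior lattice point of a dilate that the definition of $\deg(P)$ forbids. You run the argument contrapositively (extracting a witness from the failure at $k=\nu_P-1$) where the paper argues directly for each $k\ge\deg(P)$, and you make the $\nu_P=1$ edge case --- where the non-unimodularity hypothesis actually enters --- explicit, but the substance is identical.
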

\begin{proof}
	Since $\nu_P\le d$, it suffices to show that for any $d\ge k\ge \deg (P)$,
	\[\mathcal{V}+kP\cap M\twoheadrightarrow (k+1)P\cap M.\]
	Indeed, any $x\in (k+1)P\cap M$ can be written as $x=\sum_{i=0}^da_iv_i$
	such that $a_i\ge 0$ and $\sum_{i=0}^da_i=k+1$. If $a_i<1$ for all $i$, then $d>k$ and the point $\sum_{i=0}^d(1-a_i)v_i$ is an interior lattice point of $(d-k)P$, a contradiction since $d-k\le d-\deg (P)$. Hence, $a_i\ge 1$ for some $i$, say $a_0\ge 1$. Then \[x=v_0+(a_0-1)v_0+\sum_{i=1}^da_iv_i=v_0+\left((a_0-1)v_0+\sum_{i=1}^da_iv_i\right)\in \mathcal{V}+kP\cap M.\]
	Hence, $k\ge \nu_P$. The conclusion follows.
\end{proof}

\begin{proposition}\label{bound of k_P}
	Let $P=\conv(v_0,\ldots,v_d)$ be a very ample simplex. Then
	\[k_P\le \Vol(P)-|P\cap M|+d+\left\lfloor\frac{d}{2}\right\rfloor.\]
\end{proposition}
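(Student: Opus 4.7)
The plan is essentially to chain together the three ingredients that have just been established. By the remark at the start of this section, the bound \eqref{k_P inequality} (which is strictly stronger than the one we want) already holds when $P$ is unimodularly equivalent to $\Delta_d$, so I may assume $P \not\cong \Delta_d$.

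Under this assumption, Lemma \ref{nu_P<=deg P-1} gives $\nu_P \le \deg(P)$. Feeding this into the combinatorial HKN inequality \eqref{HKN}, I get
\[
\nu_P \;\le\; \deg(P) \;\le\; \Vol(P) - |P\cap M| + d + 1.
\]
Next, since $d_P$ is an integer, Ogata's bound $d_P \le d/2$ (\cite[Proposition 2.2]{Ogata2005}) sharpens to $d_P \le \lfloor d/2 \rfloor$.

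Finally, I would invoke Proposition \ref{Bound of k_P for very ample simplex}, which states $k_P \le \nu_P + d_P - 1$, and substitute the two bounds above to conclude
\[
k_P \;\le\; \nu_P + d_P - 1 \;\le\; \bigl(\Vol(P) - |P\cap M| + d + 1\bigr) + \left\lfloor\tfrac{d}{2}\right\rfloor - 1 \;=\; \Vol(P) - |P\cap M| + d + \left\lfloor\tfrac{d}{2}\right\rfloor.
\]

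There is no real obstacle here: all the conceptual work has been pushed into Proposition \ref{Bound of k_P for very ample simplex} (replacing the Ogata-style estimate) and Lemma \ref{nu_P<=deg P-1} (relating $\nu_P$ to $\deg(P)$), and the rest is arithmetic bookkeeping. The only mild subtlety is that the step reducing to $P\not\cong\Delta_d$ is needed because Lemma \ref{nu_P<=deg P-1} is false for the standard simplex (there $\deg(\Delta_d) = 0$ while $\nu_{\Delta_d}$ can be positive), but this case was disposed of separately.
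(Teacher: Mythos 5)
Your proof is correct and follows essentially the same route as the paper: it chains Proposition \ref{Bound of k_P for very ample simplex}, Lemma \ref{nu_P<=deg P-1}, the inequality \eqref{HKN}, and Ogata's bound $d_P\le d/2$ in exactly the same way, with the reduction to $P\not\cong\Delta_d$ handled at the start of the section in the paper rather than inside the proof. Your explicit remark that this reduction is genuinely needed (since Lemma \ref{nu_P<=deg P-1} fails for $\Delta_d$) is a correct and worthwhile clarification.
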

\begin{proof}
	%It suffices to consider the case $P=\conv(v_0,\ldots,v_d)$ is a very ample simplex not unimodularly equivalent to $\Delta_d$. 
	Form Proposition \ref{Bound of k_P for very ample simplex}, \eqref{HKN}, and Lemma \ref{nu_P<=deg P-1},
	\begin{align*}
	k_P&\le d_P+\nu_P-1\le d_P+\deg(P)-1\\
	&\le d_P+\Vol(P)-|P\cap M|+d.
	\end{align*}
	By \cite[Proposition 2.2]{Ogata2005}, $d_P\le \frac{d}{2}$. Therefore, since $k_P$, $\Vol(P)$, and $|P\cap M|$ are all integers,
	\[k_P\le \Vol(P)-|P\cap M|+d+\left\lfloor\frac{d}{2}\right\rfloor.\]
\end{proof}

\begin{remark}
	We show some cases that the result of Proposition \ref{bound of k_P} is stronger than \cite[Proposition 2.4]{Ogata2005}:
	\begin{enumerate}
		\item $\Vol(P)\le |P\cap M|+2$. In this case, 
		\[\Vol(P)-|P\cap M|+d+\left\lfloor\frac{d}{2}\right\rfloor\le d+\left\lfloor\frac{d}{2}\right\rfloor-2.\]
		\begin{example}
			Let $\Delta_d$ be the standard $d$-simplex. Then
			
			\[\Vol(\Delta_d)-|\Delta_d\cap M|+d+\left\lfloor\frac{d}{2}\right\rfloor=1-(d+1)+d+\left\lfloor\frac{d}{2}\right\rfloor=\left\lfloor\frac{d}{2}\right\rfloor.\]
			This is clearly a better bound compared to $d+\left\lfloor\frac{d}{2}\right\rfloor-1$.
			
		\end{example}
		\item $P^0\cap M=\emptyset$ or equivalently $\deg(P)\le d-1$. Indeed, in this case, 
		\[k_P\le d_P+\deg(P)-1\le \left\lfloor\frac{d}{2}\right\rfloor+d-2.\]
		We will show in next section that this is the only case that we still need to consider in order to verify the Eisenbud-Goto conjecture for very ample simplices.
		\begin{example}
			Consider $P=2\Delta_d$ for $d\ge 4$, where $\Delta_d$ is the standard $d$-simplex. Then $\deg (P)=2$ and by Proposition \ref{Bound of k_P for very ample simplex}, 
			\[k_P\le d_P+1\le \left\lfloor\frac{d}{2}\right\rfloor+1  <  \left\lfloor\frac{d}{2}\right\rfloor+d-1.\]
		\end{example}
		
	\end{enumerate}
\end{remark}

%We now prove our main result.
\begin{theorem}\label{EG-type bound theorem}
	Suppose that $X$ is a fake weighted projective space embedded in $\PP^r$ via a very ample line bundle. Then
	\[\reg(X)\le \deg(X)-\codim(X)+\left\lfloor\frac{\dim(X)}{2}\right\rfloor.\]
	%Futhermore, if $\dim(X)\le 5$ then the Eisenbud-Goto conjecture holds.
\end{theorem}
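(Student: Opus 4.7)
The plan is to reduce the statement to a polytope inequality via the combinatorial interpretation of regularity from \eqref{reg(X) formula}, and then to assemble the bound from Proposition \ref{bound of k_P} together with the HKN inequality \eqref{HKN}. Concretely, I would first invoke the toric dictionary: for a fake weighted projective $d$-space $X$ embedded by a very ample line bundle, the associated polytope $P$ is a very ample lattice $d$-simplex with $\dim(X)=d$, $\deg(X)=\Vol(P)$, and $\codim(X)=|P\cap M|-(d+1)$.

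Substituting this dictionary into the target right-hand side and using $\reg(X)=\max\{k_P,\deg(P)\}+1$, the theorem reduces to verifying
\[
\max\{k_P,\deg(P)\}\le \Vol(P)-|P\cap M|+d+\left\lfloor\frac{d}{2}\right\rfloor.
\]
Next I would handle the two quantities separately. The bound on $k_P$ is exactly the content of Proposition \ref{bound of k_P}. For $\deg(P)$, the HKN inequality \eqref{HKN} already yields $\deg(P)\le \Vol(P)-|P\cap M|+d+1$, and since $\lfloor d/2\rfloor\ge 1$ whenever $d\ge 2$, this is dominated by the right-hand side above (the degenerate cases $d\le 1$, i.e.\ $\PP^1$ or a point, being trivial or vacuous). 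Taking the maximum, adding $1$, and translating back through the dictionary completes the argument.

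Because the heavy combinatorial work has already been packaged into Proposition \ref{bound of k_P} (itself a consequence of the strengthened Ogata-type Lemma \ref{Modified Ogata 05, Lemma 2.5}, the comparison $\nu_P\le \deg(P)$ from Lemma \ref{nu_P<=deg P-1}, and Ogata's estimate $d_P\le d/2$), I do not expect a genuine obstacle in this proof. The only real task is bookkeeping: carefully converting between the toric invariants $\deg(X)$, $\codim(X)$, $\dim(X)$ of $X\subseteq\PP^r$ and the lattice invariants $\Vol(P)$, $|P\cap M|$, $d$ of the associated simplex, and confirming that the $\deg(P)$ summand inside the $\max$ is absorbed by the $k_P$-style bound rather than dictating it.
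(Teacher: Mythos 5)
Your proof is correct and follows essentially the same route as the paper: translate through the toric dictionary and \eqref{reg(X) formula}, bound $k_P$ by Proposition \ref{bound of k_P} and $\deg(P)$ by \eqref{HKN}, and take the maximum. The only difference is that you explicitly observe that the $\deg(P)$ term is absorbed once $\left\lfloor d/2\right\rfloor\ge 1$, a point the paper's one-line proof leaves implicit.
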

\begin{proof}
	Let $P$ be the corresponding polytope of the embedding. From \eqref{reg(X) formula}, \eqref{HKN}, and Proposition \ref{bound of k_P}, it follows that 
	\begin{align*}
	\reg(X)\le \Vol(P)-|P\cap M|+d+\left\lfloor\frac{d}{2}\right\rfloor+1= \deg(X)-\codim(X)+\left\lfloor\frac{d}{2}\right\rfloor.
	\end{align*}
	%Now if $\dim X\le 5$, then $\reg(X)\le \deg(X)-\codim(X)+\frac{3}{2}$. Since $\reg(X)$, $\deg(X)$, and $\codim(X)$ are all integers, it follows that 
	%\[\reg(X)\le \deg(X)-\codim(X)+1.\]
	%The conclusion follows.
\end{proof}

\section{The Eisenbud-Goto Conjecture for Non-hollow Very Ample Simplices}
In this section, we will improve the bound of $k$-normality for non-hollow very ample simplices.

\begin{definition}
	A lattice polytope is hollow if it has no interior lattice points.
\end{definition}
We now show that the inequality \eqref{k_P inequality} holds for non-hollow very ample simplices.
\begin{proposition}\label{EG for simplices}
	Let $P\subseteq M_{\RR}$ be a non-hollow very ample lattice $d$-simplex. Then
	\[k_P\le \Vol(P)-|P\cap M|+d+1.\]
\end{proposition}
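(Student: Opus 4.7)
The overall goal is to strengthen Proposition \ref{bound of k_P} by eliminating the $\lfloor d/2 \rfloor$ slack that came from Ogata's bound $d_P \le d/2$. The plan is to show that $k_P \le d$ for non-hollow very ample $d$-simplices (not unimodularly equivalent to $\Delta_d$, a case we may exclude since $\Delta_d$ is hollow), and then combine this with the HKN inequality \eqref{HKN} to finish. Indeed, once $k_P \le d$ is known, the fact that $\deg(P) = d$ for non-hollow $P$ together with \eqref{HKN} yields
\[
k_P \le d = \deg(P) \le \Vol(P) - |P \cap M| + d + 1,
\]
as desired.

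First, since $P$ is non-hollow, Definition \ref{degree of polytope} gives $\deg(P) = d$. Provided $P \not\cong \Delta_d$, Lemma \ref{nu_P<=deg P-1} gives $\nu_P \le d$, and Proposition \ref{Bound of k_P for very ample simplex} yields
\[
k_P \le d_P + \nu_P - 1 \le d_P + d - 1.
\]
So the target $k_P \le d$ reduces to the statement that $d_P \le 1$, i.e.\ that every non-hollow very ample lattice $d$-simplex has the integer decomposition property.

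To prove $d_P \le 1$, I would fix $x \in (k+1)P \cap M$ for $k \ge 1$ and write $x = \sum_{i=0}^{d} a_i v_i$ with $a_i \ge 0$ and $\sum a_i = k+1$. If some $a_j \ge 1$, then $x = v_j + (x - v_j)$ with $x - v_j \in kP \cap M$ and we are done. Otherwise all $a_j < 1$, which forces $k < d$, and here the extra hypothesis is essential: I would invoke an interior lattice point $v^\ast \in P^{0}\cap M$ (guaranteed by non-hollowness) to perform a modified barycentric argument, rewriting $x$ by perturbing the coefficients toward the expansion of $v^\ast = \sum b_i v_i$ so as to produce a coefficient reaching $1$, while using the very-ample hypothesis (saturation of the vertex semigroups) to ensure that the intermediate pieces remain in $kP \cap M$ and $P \cap M$ respectively.

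The main obstacle is this last step: rigorously executing the interior-point-assisted decomposition that upgrades Lemma \ref{Modified Ogata 05, Lemma 2.5} so as to replace the $d_P$-factor by $1$. The Ogata-style argument in Lemma \ref{Modified Ogata 05, Lemma 2.5} does not see the interior lattice point at all, which is precisely why it loses a factor of $d/2$; the delicate part is to show that $v^\ast$ together with the very-ample hypothesis is sufficient to close the gap in every case, particularly when the barycentric coordinates of $x$ are all small.
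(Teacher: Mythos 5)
Your reduction is tidy as far as it goes, but the proof has a genuine gap at exactly the point you flag yourself: everything rests on the claim that a non-hollow very ample lattice $d$-simplex satisfies $d_P\le 1$, i.e.\ is normal. That claim is strictly stronger than the proposition you are trying to prove (if $P$ is normal then $k_P=1\le \Vol(P)-|P\cap M|+d+1$ follows immediately from $\Vol(P)\ge h_0^*+h_1^*=|P\cap M|-d$, so you would not even need the detour through $\nu_P$ and $k_P\le d$), and the ``modified barycentric argument'' perturbing the coefficients of $x$ toward those of an interior point $v^*$ is only a heuristic: you never produce the required decomposition $x=u+y$ with $u\in P\cap M$ and $y\in kP\cap M$, and it is not explained how very ampleness enters. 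No such normality statement for non-hollow very ample simplices is known or supplied by the paper, so the argument is incomplete precisely where it needs to be an argument.

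The paper takes a different and less ambitious route: it splits on $|P\cap M|$. When $|P\cap M|=d+2$ it does prove normality (Lemma \ref{EG for |P|=d+2}), but by a bespoke computation exploiting the uniqueness of the single non-vertex lattice point $u$ together with Lemma \ref{Ogata05, Lemma 2.1}, not by using an interior point. When $|P\cap M|\ge d+3$ it does not improve the bound on $k_P$ at all: it keeps Ogata's bound $k_P\le\lfloor d/2\rfloor+d-1$ and instead bounds the right-hand side from below, observing that $\Vol(P)-|P\cap M|+d+1=h_0^*+h_2^*+\cdots+h_d^*$ and invoking Hibi's inequality $h_1^*\le h_i^*$ (valid because $P$ has an interior lattice point) with $h_1^*=|P\cap M|-d-1\ge 2$ to show this sum is at least $2d-1$, which dominates $\lfloor d/2\rfloor+d-1$. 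If you want to salvage your approach you must either actually prove the IDP claim for non-hollow very ample simplices --- which would be a notable result in its own right and certainly cannot be waved through --- or, as the paper does, use non-hollowness to enlarge the right-hand side rather than to shrink $k_P$.
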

\begin{proof}
	We will consider two cases, namely $|P\cap M|=d+2$ and $|P\cap M|\ge d+3$. For the first case, we have the following lemma:
	\begin{lemma}\label{EG for |P|=d+2}
		Suppose that $P=\conv(v_0,\ldots,v_d)$ is a very ample lattice $d$-simplex with $u$ is the only lattice point beside the vertices. Then $P$ is normal.
	\end{lemma}
	\begin{proof}
		Assume that $d_P\ge 2$. Then there exists a point $p\in d_PP\cap M$ such that $p$ cannot be written as $p=x+w$ for some $x\in (d_P-1)P\cap M$ and $w\in P\cap M$. Since $P$ is a simplex, $u$ and $p$  can be uniquely written as
		\[p=\sum_{i=0}^d\lambda_iv_i,\hspace{5mm} \sum_{i=0}^d\lambda_i=d_P\]
		and
		\[u=\sum_{i=0}^d\lambda_i^*v_i,\hspace{5mm}  \sum_{i=0}^d\lambda_i^*=1,\]
		respectively. It follows from the condition of $p$ that $\lambda_i<1$ for all $0\le i\le d$ and there exists $0\le i\le d$ such that $\lambda_i < \lambda_{i}^*$, say $i=0$. By Lemma \ref{Ogata05, Lemma 2.1}, 
		\[p+(d_P-1)v_1=\sum_{i=0}^da_iv_i+bu\]
		for some $a_i, b\in\ZZ_{\ge 0}$ such that $b+\sum_{i=0}^da_i=2d_P-1$. Replacing $p$ by $\sum_{i=0}^d\lambda_iv_i$ and $u$ by $\sum_{i=0}^d\lambda_i^*v_i$ yields
		\begin{align*}
		\lambda_0&=a_0+b\lambda_0^*\\
		\lambda_1+d_P-1&=a_1+b\lambda_1^*\\
		\lambda_2&=a_2+b\lambda_2^*\\
		\hdots\\
		\lambda_d&=a_d+b\lambda_d^*.
		\end{align*}
		Since $\lambda_0<\lambda_0^*$ and $\lambda_i<1$ for all $0\le i\le d$, it follows that $a_0=a_2=\cdots=a_d=0$ and $b=0$. Then $p=d_Pv_1$, a contradiction to the choice of $p$. Therefore, $P$ is normal.
	\end{proof}
	As a consequence, $1=k_P\le \Vol(P)-|P\cap M|+d+1=\Vol(P)-1$. Now we consider the case $|P\cap M|\ge d+3$. By the hypothesis, $|P\cap M|-(d+1)\ge 2$. Consider the Ehrhart vector $h^*=(h_0^*,\cdots, h_d^*)$ of $P$. We have
	\begin{align*}
	h_0^*&=1\\
	h_1^*&=|P\cap M|-d-1\ge 2\\
	h_d^*&=|P^0\cap M|\ge 2.
	\end{align*}
	By \cite[Theorem 1.1]{Hibi1994}, $2\le h_1^*\le h_i^*$ for all $1\le i<d$. Therefore,
	\[\Vol(P)-|P\cap M|+d+1=h_0^*+h_2^*+\cdots+h_d^*\ge 1+2(d-1)=2d-1.\]
	By \cite[Proposition 2.4]{Ogata2005}, 
	\[k_P\le \left\lfloor\frac{d}{2}\right\rfloor+d-1 \le 2d-1 \le \Vol(P)-|P\cap M|+d+1\]
	for all $d\ge 3$. The conclusion follows.
\end{proof}

Let us now recall the definition of Fano polytopes:
\begin{definition}
	A Fano polytope is a convex lattice polytope $P\subseteq M_{\RR}$ such that $P^0\cap M=\{0\}$ and each vertex $v$ of $P$ is a primitive point of $M$.
\end{definition}

From Proposition \ref{EG for simplices}, we obtain the following corollary:

\begin{corollary}
	The Eisenbud-Goto conjecture holds for any projective toric variety corresponding to a very ample Fano simplex.
\end{corollary}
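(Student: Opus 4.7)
The plan is to recognize that a Fano simplex automatically satisfies the hypotheses of Proposition \ref{EG for simplices}, reducing the statement to a direct application of results already established in the paper.

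First, I would note that by the definition of a Fano polytope, the origin lies in $P^0 \cap M$, so $|P^0 \cap M| \ge 1$ and $P$ is non-hollow. Combined with the standing assumption that $P$ is a very ample simplex, this places $P$ exactly in the setting of Proposition \ref{EG for simplices}, yielding the $k$-normality bound
\[
k_P \le \Vol(P) - |P \cap M| + d + 1.
\]

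Next, I would pair this with inequality \eqref{HKN}, which supplies the same upper bound for $\deg(P)$, to conclude that
\[
\max\{k_P, \deg(P)\} \le \Vol(P) - |P \cap M| + d + 1.
\]
Substituting into the combinatorial formula \eqref{reg(X) formula} for $\reg(X)$ and translating back via the dictionary $\deg(X) = \Vol(P)$ and $\codim(X) = |P\cap M| - (d+1)$ then gives $\reg(X) \le \deg(X) - \codim(X) + 1$, which is precisely the Eisenbud--Goto bound.

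There is no real obstacle at this stage of the argument: the analytical work has already been absorbed into Proposition \ref{EG for simplices}, whose proof treated the two cases $|P\cap M| = d+2$ and $|P\cap M| \ge d+3$ by, respectively, showing normality directly and invoking Hibi's lower bound $h_1^* \le h_i^*$ on the $h^*$-vector. The Fano hypothesis serves only to guarantee non-hollowness, so the corollary reduces to a bookkeeping exercise combining the $k_P$ bound, the HKN inequality \eqref{HKN}, and the regularity formula \eqref{reg(X) formula}.
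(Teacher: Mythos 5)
Your proposal is correct and matches the paper's (implicit) argument exactly: the Fano condition $P^0\cap M=\{0\}$ guarantees non-hollowness, Proposition \ref{EG for simplices} then gives $k_P\le \Vol(P)-|P\cap M|+d+1$, and combining with \eqref{HKN} and \eqref{reg(X) formula} yields the Eisenbud--Goto bound, which is precisely the reduction the paper set up in Section 2. No comparison is needed beyond this.
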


\section{Final Remarks}
We start with a remark that Proposition \ref{Bound of k_P for very ample simplex} fails in general.
\begin{example}[\cite{Gubeladze2009}]\label{Bruns example}
	Consider the polytope $P$ which is the convex hull of the vertices given by the columns of the following matrix
	\[M=\left(
	\begin{array}{cccccccccc}
	0&1&0&0&1&0&1&1\\
	0&0&1&0&0&1&1&1\\
	0&0&0&1&1&1&s&s+1
	\end{array}
	\right)
	\]
	with $s\ge 4$. Then $d_P=\nu_P=2$, and by \cite[Theorem 3.3]{Beck2015}, $k_P=s-1$. It is clear that $k_P>d_P+\nu_P-1$ for all $s\ge 6$.
	
	Furthermore, it can be shown that $P$ cannot be covered by very ample simplicies (\cite[Proposition 4.3.3]{Tran2018}) ; hence, it is very unlikely that we can apply Proposition \ref{Bound of k_P for very ample simplex} to find a bound of the $k$-normality of generic very ample polytopes.
\end{example}

\subsection{What About Hollow Very Ample Simplices}
Finally, we would love to see a classification of hollow very ample lattice simplices. For dimension $2$, Rabinotwiz \cite[Theorem 1]{Rabinowitz1989} showed that any such simplex is unimodularly equivalent to either $T_{p,1}:=\conv(0,(p,0),(0,1))$ for some $p\in \NN$ or $T_{2,2}=\conv(0,(2,0),(0,2))$. Now we will show a way to obtain some hollow very ample simplices in any dimension with arbitrary volume.

We recall the definition of lattice pyramids as in \cite{Nill2008}:
\begin{definition}
	Let $B\subseteq \RR^k$ be a lattice polytope with respect to $\ZZ^k$. Then $\conv(0, B \times \{1\}) \subseteq \RR^{k+1}$ is a lattice polytope with respect to $\ZZ^{k+1}$, called the ($1$-fold) standard pyramid over $B$. Recursively, we define for $l \in \NN_{\ge 1}$ in this way the $l$-fold standard pyramid over $B$. As a convention, the $0$-fold standard pyramid over $B$ is $B$ itself. 
\end{definition}

\begin{proposition}\label{pyr(P) is very ample iff P  is normal}
	Let $P$ be a lattice polytope. Then the 1-fold pyramid over $P$ is very ample if and only if $P$ is normal. 
\end{proposition}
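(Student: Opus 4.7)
The plan is to stratify the lattice points of every dilation of $Q := \conv(0, P \times \{1\}) \subset \RR^{k+1}$ according to the last coordinate. Writing an arbitrary convex combination of the scaled vertices $0$ and $k(v_i, 1)$ and reading off the last coordinate gives
\[kQ \cap \ZZ^{k+1} \;=\; \bigsqcup_{h=0}^{k} \bigl( hP \cap \ZZ^k \bigr) \times \{h\},\]
and in particular $Q \cap \ZZ^{k+1} = \{(0,0)\} \sqcup \{(w, 1) : w \in P \cap \ZZ^k\}$. A sum of $k$ lattice points of $Q$ involves, say, $a$ copies of the apex $(0,0)$ together with $k - a$ points of the form $(w_j, 1)$, giving $(w_{j_1} + \cdots + w_{j_{k-a}},\, k-a)$. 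Comparing with the stratification, a lattice point $(x, h) \in kQ$ decomposes as a sum of $k$ lattice points of $Q$ if and only if $x$ is a sum of $h$ lattice points of $P$. Hence $Q$ is normal precisely when $P$ is normal, which, together with the standard implication ``normal implies very ample'', settles the forward direction.

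For the converse I would examine only a single vertex of $Q$, namely the apex $v = (0,0)$. Its tangent cone is $\Cone(Q)$, whose lattice points are obtained by the same stratification with $k$ replaced by $\infty$, giving $\bigcup_{h \geq 0} (hP \cap \ZZ^k) \times \{h\}$. On the other hand, $\NN$-combinations of $Q \cap \ZZ^{k+1}$ consist of pairs $(y, h)$ in which $y$ is a sum of $h$ elements of $P \cap \ZZ^k$. Very ampleness of $Q$ at the apex asserts that these two sets coincide, and matching the last coordinates forces every $x \in hP \cap \ZZ^k$ to be a sum of $h$ lattice points of $P$; this is exactly the normality of $P$.

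The only nontrivial bookkeeping is the stratification of $kQ \cap \ZZ^{k+1}$ and of $\Cone(Q) \cap \ZZ^{k+1}$ by the last coordinate, which follows from the homogeneity of the conical and convex combinations defining $Q$. It is worth emphasizing that the other vertices $(v_i, 1)$ of $Q$ never need to be inspected: in the forward direction normality of $Q$ delivers very ampleness at every vertex at once, while in the reverse direction the apex alone already detects any failure of normality of $P$.
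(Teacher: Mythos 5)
Your argument is correct, and for the forward direction it essentially coincides with the paper's, which merely asserts that normality of $P$ implies normality of $Q$; your height stratification of $kQ\cap\ZZ^{k+1}$ supplies the verification the paper omits. The converse, however, takes a genuinely different route. The paper works with a finite dilate: very ampleness gives $k_Q$-normality of $Q$, the height-$t$ stratum of $k_QQ$ then yields $t$-normality of $P$ for $1\le t\le k_Q$, and the remaining degrees are covered only by appealing to the auxiliary inequality $k_Q\ge k_P$ imported from \cite[Lemma 4.2.2]{Tran2018}. You instead invoke the vertex characterization of very ampleness --- saturation of the semigroup generated by $Q\cap\ZZ^{k+1}-v$ inside $\Cone(Q-v)\cap\ZZ^{k+1}$ --- at the single vertex $v=(0,0)$, where the tangent cone stratifies by height over all $h\ge 0$ simultaneously, so that matching last coordinates delivers $h$-normality of $P$ for every $h$ in one stroke. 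What this buys is a self-contained proof: no external lemma, no need for finiteness of $k_Q$, and no case analysis over degrees; what it costs is reliance on the equivalence between the vertex-semigroup definition of very ampleness and the asymptotic $k$-normality the paper implicitly uses, which is standard but should be stated since the paper never defines very ampleness for polytopes explicitly. The only blemish is notational: you use $k$ both for the ambient dimension ($P\subset\RR^k$, $Q\subset\RR^{k+1}$) and for the dilation factor in $kQ\cap\ZZ^{k+1}$; rename one of the two.
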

\begin{proof}
	Let $Q=\conv(0,P\times\{1\})$ be the 1-fold pyramid over $P$. Then it is easy to see that if $P$ is normal then so is $Q$. Now suppose that $Q$ is very ample. We have $k_Q\ge k_P$ (\cite[Lemma 4.2.2]{Tran2018}) and each lattice point of $k_QQ\cap M$ sits in $(tP\cap M)\times\{t\}$ for some $0\le t\le k_Q$. In particular, suppose that $(x,t)\in (tP\cap M)\times\{t\}\subseteq k_QQ\cap M$. Then
	\[(x,t)=\sum_{i=1}^t(u_i,1)+(k_Q-t)0\]
	for some $u_i\in P\cap M$. It follows that $x=\sum_{i=1}^tu_i$. Hence, $P$ is $t$-normal for all $k_Q\ge t\ge 1$. Since $k_Q\ge k_P$, it follows that $P$ is normal. The conclusion follows.
\end{proof}

From Proposition \ref{pyr(P) is very ample iff P  is normal}, if we take any $(d-2)$-fold pyramid over either $T_{p,1}$ with $p\in \ZZ_{\ge 1}$ or $T_{2,2}$, which are all normal, then we obtain a hollow normal (hence very ample) $d$-simplex with normalized volume $p$. The Eisenbud-Goto conjecture holds for these.
\begin{example}
We give here an example to demonstrate the case that if $Q$ is very ample but not normal then the $1$-fold pyramid over $Q$ is not very ample. Let $Q$ be the convex polytope given by taking $s=4$ in Example \ref{Bruns example}. Then $Q$ is very ample; however, the $1$-fold pyramid of $Q$, which is given by the convex hull of 
	\[\left(
	\begin{array}{ccccccccccc}
	0&0&1&0&0&1&0&1&1\\
	0&0&0&1&0&0&1&1&1\\
	0&0&0&0&1&1&1&4&5\\
	0&1&1&1&1&1&1&1&1
	\end{array}
	\right),
	\]
	is not very ample.
\end{example}

%\begin{remark}
%	By Proposition \ref{k_P=k_Q if P is a very ample pyramid over Q}, we know that it is not possible to construct a counterexample for the Eisenbud-Goto conjecture using $n$-fold pyramids over very ample non-normal lattice polytopes. Indeed, suppose that $P$ is a very ample $1$-fold pyramid over $Q$, with the ambient lattices are $M$ and $M'$, respectively. The combinatorial Eisenbud-Goto conjecture asks if 
%	\[k_P\le \Vol(P)-|P\cap M|+\dim P+1.\]
%	We have $k_P=k_Q$, $\Vol(P)=\Vol(Q)$, and 
%	\[|P\cap M|-\dim P=|Q\cap M'|-\dim Q.\]
%\end{remark}
%We can restate the remark above as the following corollary.
%\begin{corollary}
%	If the Eisenbud-Goto conjecture holds for $P$ then it holds for any $n$-fold pyramid over $P$.
%\end{corollary}
%From Proposition \ref{k_P=k_Q if P is a very ample pyramid over Q}, we obtain a way of constructing normal polytopes as follows.
%\begin{corollary}
%	Any $n$-fold pyramid over a normal polytope is normal. In particular, any $n$-fold pyramid over a lattice polygon is normal.
%\end{corollary}
%\begin{proof}
%	This follows from an inductive argument using Proposition \ref{k_P=k_Q if P is a very ample pyramid over Q}, noticing that any lattice polygon is normal \cite[Theorem 1.3.3 (a)]{Bruns1997}. 
%\end{proof}

%====================================
% 
%           BIBILIOGRAPHY
%
%====================================
\bibliographystyle{amsalpha}
\bibliography{Bibiliography}{}

\providecommand{\bysame}{\leavevmode\hbox to3em{\hrulefill}\thinspace}
\providecommand{\MR}{\relax\ifhmode\unskip\space\fi MR }
% \MRhref is called by the amsart/book/proc definition of \MR.
\providecommand{\MRhref}[2]{%
  \href{http://www.ams.org/mathscinet-getitem?mr=#1}{#2}
}
\providecommand{\href}[2]{#2}
\begin{thebibliography}{BDGM15}

\bibitem[BDGM15]{Beck2015}
Matthias Beck, Jessica Delgado, Joseph Gubeladze, and Mateusz Micha\l{}ek,
  \emph{Very ample and {K}oszul segmental fibrations}, Journal of Algebraic
  Combinatorics \textbf{42} (2015), no.~1, 165–182.

\bibitem[BN07]{Batyrev2007}
Victor Batyrev and Benjamin Nill, \emph{Multiples of lattice polytopes without
  interior lattice points}, Moscow Mathematical Journal \textbf{7} (2007),
  no.~2, 195--207.

\bibitem[CLS11]{Cox2011}
David~A. Cox, John~B. Little, and Henry~K. Schenck, \emph{Toric varieties
  (graduate studies in mathematics)}, American Mathematical Society, 2011.

\bibitem[DS02]{Derksen2002}
Harm Derksen and Jessica Sidman, \emph{A sharp bound for the
  castelnuovo{\textendash}mumford regularity of subspace arrangements},
  Advances in Mathematics \textbf{172} (2002), no.~2, 151--157.

\bibitem[EG84]{Eisenbud1984}
David Eisenbud and Shiro Goto, \emph{Linear free resolutions and minimal
  multiplicity}, Journal of Algebra \textbf{88} (1984), no.~1, 89--133.

\bibitem[Ehr62]{Ehrhart1962}
Eugène Ehrhart, \emph{Sur les polyèdres rationnels homothétiques à n
  dimensions}, Comptes rendus de l'Académie des Sciences \textbf{254} (1962),
  616--618.

\bibitem[GB09]{Gubeladze2009}
Joseph Gubeladze and Winfried Bruns, \emph{Polytopes, rings, and {K}-theory},
  Springer New York, 2009.

\bibitem[Gia05]{Giaimo2005}
Daniel Giaimo, \emph{On the {C}astelnouvo-{M}umford regularity of connected
  curves}, Transactions of the American Mathematical Society \textbf{358}
  (2005), no.~1, 267--284.

\bibitem[Her06]{Hering2006}
Milena Hering, \emph{Syzygies of toric varieties}, Ph.d. thesis, University of
  Michigan, 2006.

\bibitem[Hib94]{Hibi1994}
T.~Hibi, \emph{{A Lower Bound Theorem for Ehrhart Polynomials of Convex
  Polytopes}}, Advances in Mathematics \textbf{105} (1994), no.~2, 162–165.

\bibitem[HKN17]{Hofscheier2017}
Johannes Hofscheier, Lukas Katth\"{a}n, and Benjamin Nill, \emph{Ehrhart theory
  of spanning lattice polytopes}, International Mathematics Research Notices
  \textbf{rnx065} (2017), 1--27.

\bibitem[Kwa98]{Kwak1998}
Sijong Kwak, \emph{Castelnuovo regularity for smooth subvarieties of dimensions
  3 and 4}, Journal of Algebraic Geometry \textbf{7} (1998), no.~1, 195--206.

\bibitem[Laz87]{Lazarsfeld1987}
Robert Lazarsfeld, \emph{A sharp {C}astelnuovo bound for smooth surfaces}, Duke
  Mathematical Journal \textbf{55} (1987), no.~2, 423--429.

\bibitem[Miy00]{Miyazaki2000}
Chikashi Miyazaki, \emph{Sharp bounds on castelnuovo-mumford regularity},
  Transactions of the American Mathematical Society \textbf{352} (2000), no.~4,
  1675--1686.

\bibitem[MP17]{McCullough2017}
Jason McCullough and Irena Peeva, \emph{Counterexamples to the
  {E}isenbud{\textendash}{G}oto regularity conjecture}, Journal of the American
  Mathematical Society \textbf{31} (2017), no.~2, 473--496.

\bibitem[Nil08]{Nill2008}
Benjamin Nill, \emph{Lattice polytopes having h*-polynomials with given degree
  and linear coefficient}, European Journal of Combinatorics \textbf{29}
  (2008), no.~7, 1596–1602.

\bibitem[Oga05]{Ogata2005}
Shoetsu Ogata, \emph{k -normality of weighted projective spaces}, Kodai
  Mathematical Journal \textbf{28} (2005), no.~3, 519--524.

\bibitem[Rab89]{Rabinowitz1989}
Stanley Rabinowitz, \emph{A census of convex lattice polygons with at most one
  interior lattice point}, Ars Combinatorica \textbf{28} (1989), 83–96.

\bibitem[Sta80]{Stanley1980}
Richard~P. Stanley, \emph{Decompositions of rational convex polytopes},
  Combinatorial Mathematics, Optimal Designs and Their Applications
  (J.~Srivastava, ed.), Annals of Discrete Mathematics, vol.~6, Elsevier, 1980,
  pp.~333 -- 342.

\bibitem[Stu95]{Sturmfels1995}
Bernd Sturmfels, \emph{Equations defining toric varieties}, Algebraic
  Geometry—Santa Cruz 1995, Proc. Sympos. Pure Math., vol.~62, American
  Mathematical Society, 1995, pp.~437--449.

\bibitem[Tra18]{Tran2018}
Bach~Le Tran, \emph{On $k$-normality and regularity of normal projective toric
  varieties}, Ph.D. thesis, University of Edinburgh, 2018.

\end{thebibliography}
\bibliographystyle{plain}
%\printindex

\end{document}